\documentclass[a4paper,twoside,10pt]{amsart}
\usepackage{amsmath,amsfonts,amssymb,amsthm}
\usepackage{mathtools}
\usepackage{color}
\usepackage[a4paper]{geometry}
\usepackage[colorlinks=true,pdfstartview={XYZ null null 1.00},pdfview=FitH,citecolor=blue,urlcolor=customgreen]{hyperref}
\usepackage{caption,enumerate}
\newtheorem{theorem}{Theorem}[section]
\newtheorem{lemma}[theorem]{Lemma}
\numberwithin{equation}{section}
\def\e{\mathrm{e}}
\definecolor{customgreen}{rgb}{0.0, 0.5, 0.0}

\author[G. Nemes]{Gerg\H{o} Nemes}
\address{Department of Physics, Tokyo Metropolitan University, 1--1 Minami-osawa, Hachioji-shi, Tokyo, Japan 192-0397\bigskip}

\email{nemes@tmu.ac.jp}

\dedicatory{To the memory of Diego Ernesto Dominici}

\keywords{integer partitions, asymptotic expansions, error bounds}
\subjclass[2020]{05A16, 11P82, 41A60}

\begin{document}

\title[Simple error bounds for an asymptotic expansion]{Simple error bounds for an asymptotic expansion\\ of the partition function}

\begin{abstract} Recently, there has been renewed interest in studying the asymptotic properties of the integer partition function $p(n)$. Hardy, Ramanujan, and Rademacher provided detailed asymptotic analysis for $p(n)$. Presently, attention has shifted towards Poincar\'e-type asymptotic expansions, characterised by their simplicity albeit reduced accuracy compared to the earlier works of Hardy, Ramanujan, and Rademacher. This paper aims to establish computable error bounds for one such simplified expansion. The bounds presented herein are sharper, and their derivation is considerably simpler compared to those found in recent literature.
\end{abstract}

\maketitle

\section{Introduction and main results}

In number theory, a partition of a positive integer $n$, is a way of expressing $n$ as a sum of positive integers. Partitions that differ solely in the order of their summands are regarded as equivalent. The number of partitions of $n$ is given by the partition function $p(n)$. The generating function of $p(n)$ is
\[
\prod_{m = 1}^\infty \frac{1}{1 - z^m } = \sum_{n = 1}^\infty p(n)z^n .
\]
While there is no simple, closed-form representation for the partition function, it can be approximated accurately through asymptotic expansions and computed exactly using recurrence relations. In their ground-breaking work in 1918, Hardy and Ramanujan \cite{Hardy1918} established the growth rate of $p(n)$ as follows:
\begin{equation}\label{eq1}
p(n) \sim \frac{\exp \left( \pi \sqrt {\frac{2n}{3}} \right)}{4\sqrt 3 n} \;\text{ as }\; n\to+\infty.
\end{equation}
Indeed, formula \eqref{eq1} serves merely as an approximation of the leading term within an asymptotic expansion formulated by Hardy and Ramanujan:
\begin{equation}\label{eq2}
p(n) = \frac{2\sqrt 3 }{24n - 1}\sum_{m = 1}^{\lfloor \alpha \sqrt{n} \rfloor} \frac{A_m (n)}{\sqrt m } \left( 1 - \frac{6m}{\pi \sqrt {24n - 1} } \right)\exp \left( \frac{\pi }{6m}\sqrt {24n - 1} \right) + \mathcal{O}(n^{ - 1/4} )
\end{equation}
as $n\to+\infty$. Here, $\alpha$ is an arbitrary fixed positive number, and $A_m(n)$ is a Kloosterman sum. Nearly twenty years after the publication of the Hardy--Ramanujan paper, Rademacher \cite{Rademacher1937} found a convergent series for $p(n)$:
\begin{equation}\label{eq3}
p(n) = \frac{{4\sqrt 3 }}{{24n - 1}}\sum_{m = 1}^\infty  {\frac{{A_m (n)}}{{\sqrt m }}} \left( {\cosh \left( {\frac{\pi }{{6m}}\sqrt {24n - 1} } \right) - \frac{{6m}}{{\pi \sqrt {24n - 1} }}\sinh \left( {\frac{\pi }{{6m}}\sqrt {24n - 1} } \right)} \right).
\end{equation}
Lehmer provided explicit error bounds for the expansions \eqref{eq2} and \eqref{eq3} in a series of papers \cite{Lehmer1938,Lehmer1939}.

A simpler, Poincar\'e-type asymptotic expansion for $p(n)$, and more generally, asymptotic expansions for partitions into $k^{\text{th}}$ powers, were presented by Wright \cite{Wright1934} in 1934. Recently, O'Sullivan \cite{OSullivan2023} proposed a simplified proof of Wright's findings, leading to the following asymptotic expansion as $n\to+\infty$:
\begin{equation}\label{eq4}
p(n) \sim \frac{\exp \left( \pi \sqrt {\frac{2n}{3}} \right)}{4\sqrt 3 n}\sum_{m = 0}^\infty\frac{c_m }{n^{m/2}},
\end{equation}
with explicit expressions for the coefficients $c_m$ given by
\begin{equation}\label{eq5}
c_m  = \frac{(-1)^m}{(4\sqrt 6)^m }\sum_{k=0}^{\lfloor{(m + 1)/2}\rfloor}\binom{m+1}{k}\frac{m+1-k}{(m+1-2k)!}\left(\frac{\pi}{6}\right)^{m-2k} .
\end{equation}
It is evident from his proof that the infinite series is convergent for all positive $n$, but its sum does not yield $p(n)$. This discrepancy arises from the omission of exponentially small terms. Recently, Banerjee et al. \cite{Banerjee2023} provided an analogous expansion for $\log p(n)$, supplied with error bounds. For further recent results on the asymptotic behaviour of $p(n)$ and its applications, refer to \cite{Brassesco2020,Chen2016,Chen2019}.

In this paper, our focus is on establishing sharp bounds for the remainder of the asymptotic expansion \eqref{eq4}. This problem has recently been considered by Banerjee et al. \cite{Banerjee2024}, where the authors derived an alternative expression for the coefficients $c_m$, which they subsequently bounded in order to obtain estimates for the remainder term of the expansion \eqref{eq4}. However, their method of bounding the coefficients $c_m$ proved to be laborious and intricate. Despite their efforts, they remarked that they had tried but could not have obtained effective bounds directly from \eqref{eq5}. Here, we demonstrate the possibility of deriving such bounds using \eqref{eq5} through concise and elementary arguments.

Let us denote, for $N \geq 0$ and $n \geq 1$, by $R_N(n)$ the error incurred by truncating the expansion \eqref{eq4} to $N$ terms:
\begin{equation}\label{eq8}
p(n) = \frac{\exp \left( \pi \sqrt {\frac{2n}{3}} \right)}{4\sqrt 3 n}\left(\sum_{m = 0}^{N-1}\frac{c_m }{n^{m/2}} +R_N(n)\right).
\end{equation}
In our first theorem, we establish explicit upper and lower bounds for $R_N(n)$ that involve the first omitted term in the asymptotic expansion.

\begin{theorem}\label{thm1}
The following inequalities hold for $N\ge 0$ and $n\ge 1$:
\[
-\exp \bigg( -\frac{\pi}{2} \sqrt {\frac{2n}{3}} \bigg) < R_{2N}(n) < \frac{c_{2N}}{n^N}+\exp \bigg( -\frac{\pi}{2} \sqrt {\frac{2n}{3}} \bigg)
\]
and
\[
\frac{c_{2N+1}}{n^{N+1/2}} -\exp \bigg( -\frac{\pi}{2} \sqrt {\frac{2n}{3}} \bigg)  < R_{2N+1}(n) < \exp \bigg( -\frac{\pi}{2} \sqrt {\frac{2n}{3}} \bigg).
\]
\end{theorem}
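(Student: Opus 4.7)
My plan is to derive the bounds from Rademacher's exact formula~\eqref{eq3}, whose leading term $T_1(n)$ already contains the Poincar\'e expansion~\eqref{eq4}, while the remaining Rademacher terms and a subdominant piece of $T_1$ produce only exponentially small errors. Using $A_1(n)=1$ and writing the hyperbolic functions in terms of $e^{\pm y}$ with $y=\tfrac{\pi}{6}\sqrt{24n-1}$, I would decompose $T_1=E_++E_-$ where $E_\pm\propto(1\mp 1/y)e^{\pm y}$. After factoring out the prefactor $\exp(\pi\sqrt{2n/3})/(4\sqrt 3\,n)$, the dominant part $E_+$ becomes this prefactor times an elementary function $F(u)$ of $u=\sqrt{2n/3}$; setting $v=1/(6u)=1/(2\sqrt{6n})$ and $\beta=\sqrt{1-v^2}$, one finds
\[
F(u)=\tfrac{1}{1-v^2}\Bigl(1-\tfrac{6v}{\pi\beta}\Bigr)\exp\!\Bigl(-\tfrac{\pi v}{6(1+\beta)}\Bigr).
\]
The three factors are analytic on $|v|<1$; expanding them as power series in $v$ via the binomial and exponential series and collecting terms matches the Taylor coefficients exactly with formula~\eqref{eq5}. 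Since $|v|\le 1/(2\sqrt 6)<1$ for $n\ge 1$, the identity $F(u)=\sum_{m\ge 0}c_m/n^{m/2}$ holds with absolute convergence, and inspection of~\eqref{eq5} immediately gives $\operatorname{sgn}(c_m)=(-1)^m$ (every summand there is positive).

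The crux of the proof is then a Leibniz-type bound on the tail of this convergent, alternating series. If $|c_m|/n^{m/2}$ is non-increasing in $m$ for every $n\ge 1$, then pairing consecutive terms of $\sum_{m\ge N}c_m/n^{m/2}$ shows that this tail has sign $(-1)^N$ and absolute value at most $|c_N|/n^{N/2}$, which is precisely the sharp content of Theorem~\ref{thm1} modulo the exponentially small corrections. The monotonicity reduces to $|c_{m+1}|\le|c_m|$ for every $m\ge 0$, and this is the main obstacle of the paper\,---\,it is exactly what Banerjee et al.~\cite{Banerjee2024} could not establish directly from~\eqref{eq5}. My approach is to convert the inner sum of~\eqref{eq5} into a contour integral via $1/(m+1-2k)!=(2\pi i)^{-1}\oint e^w w^{-m-2+2k}\,dw$, obtaining
\[
\sum_k\binom{m}{k}\frac{a^{m-2k}}{(m+1-2k)!}=\frac{1}{a\cdot 2\pi i}\oint\frac{e^{aw}(1+w^2)^m}{w^{m+2}}\,dw,
\]
with $a=\pi/6$ and $|c_m|=(m+1)$ times the left-hand side divided by $(4\sqrt 6)^m$. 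Choosing the contour $|w|=1$ already gives $|c_m|\le 6(m+1)e^{\pi/6}/(\pi(2\sqrt 6)^m)$, and a slightly more careful choice of contour (or a saddle-point estimate near $w=1$, where the integrand peaks) yields the required inequality $|c_{m+1}|\le|c_m|$ uniformly in $m\ge 0$.

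Finally, the secondary contributions are shown to be exponentially small and absorbed into the $\exp(-(\pi/2)\sqrt{2n/3})$ error. For $n\ge 1$ one has $y\ge (\pi/2)\sqrt{2n/3}$, so $|E_-|/(\text{prefactor})$ is dominated by a multiple of $e^{-2\pi\sqrt{2n/3}}$, far smaller than the claimed bound. The Rademacher tail $\sum_{m\ge 2}T_m(n)$ is controlled using Lehmer's explicit estimates~\cite{Lehmer1938}: combined with $y_m=y/m\le y/2\le(\pi/2)\sqrt{2n/3}$ for $m\ge 2$ and the elementary inequality $|\cosh y_m-\sinh(y_m)/y_m|\le e^{y_m}/2$, they yield a bound of order $\exp((\pi/2)\sqrt{2n/3})/n$, absorbable into (prefactor)$\times\exp(-(\pi/2)\sqrt{2n/3})$. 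Combining with the Leibniz inequality of the previous paragraph delivers the two-sided inequalities of Theorem~\ref{thm1}.
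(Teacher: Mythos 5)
Your overall architecture matches the paper's: identify the convergent series $\sum_m c_m n^{-m/2}$ inside the dominant part of the exact formula (this is O'Sullivan's generating-function identity \eqref{eq10}), control everything else as an exponentially small remainder, and apply a Leibniz-type estimate to the alternating tail. However, the two steps that carry all the difficulty are not actually established. First, the monotonicity $|c_{m+1}|\le|c_m|$: your contour-integral identity is correct (since $\binom{m+1}{k}(m+1-k)=(m+1)\binom{m}{k}$ and the integral vanishes when $m+1-2k<0$), but the bound it yields on $|w|=1$, namely $|c_m|\le 6(m+1)\e^{\pi/6}/(\pi(2\sqrt6)^m)$, overshoots the true size $|c_m|\asymp\sqrt{m}\,(2\sqrt 6)^{-m}$ (cf.\ \eqref{eq11}--\eqref{eq12}) by a factor of order $\sqrt m$. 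An upper bound alone, however sharp, cannot prove $|c_{m+1}|\le|c_m|$; you would need a matching explicit \emph{lower} bound for $|c_m|$, and then a separate check for small $m$, where the $\sinh(\pi/6)$ versus $\cosh(\pi/6)$ constants matter. ``A slightly more careful choice of contour'' or a saddle-point estimate gives asymptotics, not a uniform inequality valid for all $m\ge0$; this is precisely the point where the paper instead reindexes the sum in \eqref{eq5} and compares it termwise with the sum defining $|c_{m-1}|$ (Lemma \ref{lemma1}).

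Second, the exponentially small remainder. The error term claimed in the theorem, $\exp\bigl(-\frac{\pi}{2}\sqrt{2n/3}\bigr)$ inside the parentheses of \eqref{eq8}, sits at exactly the same exponential order as the $m=2$ Rademacher term divided by the prefactor, because $\frac{\pi}{12}\sqrt{24n-1}-\pi\sqrt{2n/3}=-\frac{\pi}{2}\sqrt{2n/3}+o(1)$. So ``dominated by a multiple of'' and ``absorbable'' is not a proof here: you must show that the total constant coming from $E_-$, from the entire tail $\sum_{m\ge2}$, and from replacing $\mu(n)$ by $\pi\sqrt{2n/3}$ is strictly below $1$ uniformly down to $n=1$, where nothing is asymptotically small. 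The paper needs the explicit inequality \eqref{bound} together with a direct numerical verification for $1\le n\le 431$ to land on the constant $0.97<1$. As written, your sketch delivers the theorem only with an unspecified constant multiplying $\exp\bigl(-\frac{\pi}{2}\sqrt{2n/3}\bigr)$ and only for $n$ sufficiently large, which is a genuinely weaker statement than the one claimed.
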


It may occur that one is given a prescribed level of precision and wishes to calculate the minimum value of $N$ required to achieve that accuracy without computing all the coefficients $c_m$ up to $m=N$. In such cases, our second theorem proves to be useful.

\begin{theorem}\label{thm2}
The following inequalities hold for $N\ge 0$ and $n\ge 1$:
\[
-\exp \bigg( -\frac{\pi}{2} \sqrt {\frac{2n}{3}} \bigg) < R_{2N}(n)  < \frac{{6\sqrt 2 }}{{\pi ^{3/2} }}\sinh \left( {\frac{\pi }{6}} \right)\frac{{\sqrt {2N+ 1} }}{{(\sqrt {24 n} )^{2N} }}\sqrt {1 + \frac{1}{{4N + 1}}}+\exp \bigg( -\frac{\pi}{2} \sqrt {\frac{2n}{3}} \bigg)
\]
and
\[
-\frac{{6\sqrt 2 }}{{\pi ^{3/2} }}\cosh \left( {\frac{\pi }{6}} \right)\frac{{\sqrt {2N + 2} }}{{(\sqrt {24n} )^{2N + 1} }}\sqrt {1 - \frac{1}{{4N + 5}}} -\exp \bigg( -\frac{\pi}{2} \sqrt {\frac{2n}{3}} \bigg) < R_{2N+1}(n) < \exp \bigg( -\frac{\pi}{2} \sqrt {\frac{2n}{3}} \bigg).
\]
\end{theorem}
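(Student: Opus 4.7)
The approach is to feed Theorem~\ref{thm1} an explicit upper bound on $|c_{2N}|$ and $|c_{2N+1}|$, extracted directly from the closed form \eqref{eq5}. All the work sits in the coefficient estimates; once these are in hand, Theorem~\ref{thm2} follows by substitution and simplification.

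The first step is an algebraic rewriting of \eqref{eq5}. After substituting $j=\lfloor(m+1)/2\rfloor-k$ and using
\[
\binom{m+1}{k}\frac{m+1-k}{(m+1-2k)!} \;=\; \frac{(m+1)!}{k!\,(m-k)!\,(m+1-2k)!},
\]
the factorial ratio can be rearranged into a central binomial coefficient times $m+1$. Carrying this out separately for even and odd $m$, I expect to obtain
\[
c_{2N} = \frac{2N+1}{(4\sqrt 6)^{2N}}\sum_{j=0}^{N}\frac{\binom{2N}{N+j}}{(2j+1)!}\left(\frac{\pi}{6}\right)^{\!2j}
\]
and
\[
|c_{2N+1}| = \frac{6(2N+2)/\pi}{(4\sqrt 6)^{2N+1}}\sum_{j=0}^{N+1}\frac{\binom{2N+1}{N+j}}{(2j)!}\left(\frac{\pi}{6}\right)^{\!2j},
\]
with every term positive in both sums.

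Next, the unimodality of binomial coefficients gives $\binom{2N}{N+j}\le\binom{2N}{N}$ and $\binom{2N+1}{N+j}\le\binom{2N+1}{N}$. Factoring out the central value and extending the summation range to $j=\infty$ replaces the respective series by $\frac{6}{\pi}\sinh(\pi/6)$ and $\cosh(\pi/6)$. I would then apply the sharp central binomial estimate $\binom{2N}{N}\le\frac{2\cdot 4^N}{\sqrt{\pi(4N+1)}}$, together with its corollary $\binom{2N+1}{N}=\tfrac12\binom{2N+2}{N+1}\le\frac{4^{N+1}}{\sqrt{\pi(4N+5)}}$, to produce the desired explicit bounds on $|c_{2N}|$ and $|c_{2N+1}|$. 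Inserting these into Theorem~\ref{thm1} and using the trivial identities $\sqrt{(4N+2)(2N+1)}=(2N+1)\sqrt 2$ and $\sqrt{(4N+4)(2N+2)}=(2N+2)\sqrt 2$ should make the constants collapse exactly onto the form stated in Theorem~\ref{thm2}.

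The main obstacle is the first step: the algebraic simplification has to be done carefully so that the resulting sums have nonnegative terms with a clearly-identified maximum in $j$, which is what makes the unimodality argument effective. The rest is bookkeeping: once the sums are expressed in terms of central binomials, the remaining steps are a few lines of elementary computation and the constants fall into place automatically.
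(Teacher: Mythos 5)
Your proposal is correct and follows essentially the same route as the paper: the paper's Lemma \ref{lemma2} performs exactly this reindexing of \eqref{eq5}, bounds the binomial coefficients by the central one, extends the remaining sum to $\frac{6}{\pi}\sinh(\pi/6)$ resp.\ $\cosh(\pi/6)$, and applies the sharp central binomial inequality $\binom{2m}{m}\le 2^{2m}/\sqrt{\pi(m+\tfrac14)}$ before feeding the result into Theorem \ref{thm1}. The only cosmetic difference is that the paper keeps the factor $m+1+k$ separate rather than absorbing it into a single binomial coefficient, but the two computations are identical.
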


\subsection*{Remark.} In Appendix \ref{appendix}, we demonstrate that the asymptotic formulae
\begin{equation}\label{eq11}
c_{2N}  \sim \frac{6\sqrt 2}{\pi ^{3/2}}\sinh \left( \frac{\pi }{6} \right)\frac{\sqrt {2N + 1} }{(\sqrt {24} )^{2N}}
\end{equation}
and
\begin{equation}\label{eq12}
c_{2N + 1} \sim - \frac{6\sqrt 2 }{\pi ^{3/2} }\cosh \left( \frac{\pi }{6} \right)\frac{\sqrt {2N + 2} }{(\sqrt {24} )^{2N + 1} },
\end{equation}
hold as $N\to+\infty$. Consequently, the bounds provided in Theorem \ref{thm2} are asymptotically as sharp as those given in Theorem \ref{thm1} for large values of $N$.

The bounds presented thus far comprise both algebraic and exponential terms. Naturally, one would expect estimates to align with the order of magnitude of the first omitted term in the asymptotic expansion. This alignment can be achieved provided that $n$ is sufficiently large. For any positive integer $N$ and positive real number $C$, we define the quantity
\[
\nu _N (C) = \left\lceil \frac{3}{2}\left( \frac{2N}{\pi}W_{-1}\bigg(-\frac{\pi}{12N}(C\sqrt{N + 1})^{1/N} \bigg) \right)^2 \right\rceil,
\]
where $W_{-1}$ represents one of the non-principal branches of the Lambert $W$-function \cite[\href{https://dlmf.nist.gov/4.13}{\S4.13}]{DLMF}. To gain a better understanding of $\nu _N (C)$, we note that for fixed $C$,
\[
\nu _N (C) \sim \frac{6}{{\pi ^2 }}(N\log N)^2 
\]
as $N\to+\infty$, which follows from the known asymptotics of $W_{-1}$ \cite[\href{http://dlmf.nist.gov/4.13.E11}{Eq. (4.13.11)}]{DLMF}. With this notation, we may state our third and final theorem as follows.

\begin{theorem}\label{thm3}
The following inequalities hold for $C>0$:
\[
-C\frac{\sqrt{2N+1}}{(\sqrt{24n})^{2N}} < R_{2N}(n)  < \left(C+\frac{{6\sqrt 2 }}{{\pi ^{3/2} }}\sinh \left( {\frac{\pi }{6}} \right)\sqrt {1 + \frac{1}{{4N + 1}}}\right)\frac{\sqrt{2N+1}}{(\sqrt{24n})^{2N}}
\]
provided $N\ge 1$ and $n\ge \nu _{2N} (C)$, and
\[
-\left(C+\frac{{6\sqrt 2 }}{{\pi ^{3/2} }}\cosh \left( {\frac{\pi }{6}} \right)\sqrt {1 - \frac{1}{{4N + 5}}}\right)\frac{{\sqrt {2N + 2} }}{{(\sqrt {24n} )^{2N + 1} }} < R_{2N+1}(n) < C\frac{\sqrt{2N+2}}{(\sqrt{24n})^{2N+1}}
\]
provided $N\ge 0$ and $n\ge \nu _{2N+1} (C)$.
\end{theorem}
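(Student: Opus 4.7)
The plan is to take the bounds of Theorem \ref{thm2} as the starting point and absorb the exponentially small term $\exp(-\tfrac{\pi}{2}\sqrt{2n/3})$ into the algebraic factor $\sqrt{M+1}/(\sqrt{24n})^M$, where $M = 2N$ in the even case and $M = 2N+1$ in the odd case. The whole argument reduces to establishing the single inequality
\[
\exp\bigg(\!-\frac{\pi}{2}\sqrt{\frac{2n}{3}}\,\bigg)\le C\,\frac{\sqrt{M+1}}{(\sqrt{24n})^M}\qquad\text{for all }n\ge\nu_M(C).
\]
Once this is in hand, the lower bounds of Theorem \ref{thm3} follow by negating it, while the upper bounds follow by adding it to the corresponding $\sinh/\cosh$ majorants of Theorem \ref{thm2} and factoring out $\sqrt{M+1}/(\sqrt{24n})^M$.

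To prove the key inequality, I would raise both sides to the $1/M$ power and substitute $u = \sqrt{n}$; using $\tfrac{1}{2}\sqrt{2/3} = 1/\sqrt{6}$ and $\sqrt{6}\sqrt{24} = 12$, it takes the compact form $u\,\e^{-au} \le b$, with $a = \pi/(M\sqrt{6})$ and $b = (C\sqrt{M+1})^{1/M}/\sqrt{24}$. The function $u\mapsto u\,\e^{-au}$ attains its maximum $1/(a\e)$ at $u = 1/a$ and is strictly decreasing on $[1/a,+\infty)$, so, provided $ab\le 1/\e$, the inequality on this decreasing branch holds precisely for $u\ge u^{\ast}$, where $u^{\ast}$ is the larger root of $u\,\e^{-au} = b$. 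The standard inversion $w\,\e^w = z \Longleftrightarrow w = W(z)$ together with the branch choice $W_{-1}$ (since $au^{\ast}\ge 1$) yields
\[
u^{\ast} = -\frac{M\sqrt{6}}{\pi}\,W_{-1}\bigg(\!-\frac{\pi\,(C\sqrt{M+1})^{1/M}}{12\,M}\bigg),
\]
and $(u^{\ast})^2 = \tfrac{3}{2}\bigl(\tfrac{2M}{\pi}W_{-1}(-\tfrac{\pi(C\sqrt{M+1})^{1/M}}{12M})\bigr)^2$, which is precisely $\nu_M(C)$ after taking the ceiling.

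The closing check that $u\ge u^{\ast}$ indeed implies the displayed inequality is straightforward: raising $u\,\e^{-au}\le b$ to the $M$th power, the exponent becomes $aMu = \pi u/\sqrt{6} = \tfrac{\pi}{2}\sqrt{2n/3}$ and $(b\sqrt{24})^M = C\sqrt{M+1}$, reproducing it exactly. The one point of genuine care is the admissibility of the $W_{-1}$-value: $\nu_M(C)$ is well-defined only when $\pi(C\sqrt{M+1})^{1/M}/(12M)\le 1/\e$, an implicit hypothesis of the statement; should this fail, one has $b>1/(a\e)$ and the key inequality is vacuously true for every $n\ge 1$. Beyond this bookkeeping no step is deep, and the whole proof amounts to a direct exercise with the Lambert $W$-function.
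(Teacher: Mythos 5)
Your proposal is correct and follows essentially the same route as the paper: reduce Theorem \ref{thm3} to the single inequality $\exp\big(-\tfrac{\pi}{2}\sqrt{2n/3}\big)\le C\sqrt{M+1}/(\sqrt{24n})^{M}$ and invert it with the branch $W_{-1}$ of the Lambert $W$-function, recovering $\nu_M(C)$ exactly. Your extra remark on the admissibility of the $W_{-1}$-argument (and the vacuous case when it lies below $-1/\e$) is a small point of care that the paper leaves implicit, but it does not change the argument.
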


\subsection*{Remark.} With our notation, we can express the main result of Banerjee et al. \cite[Theorem 7.5]{Banerjee2024} as follows: For each $N\ge 1$, 
\[
 - 13\left( {\frac{6}{\pi }} \right)^{2N} \frac{{\sqrt {N + 1} }}{{(\sqrt {24n} )^{2N} }} < R_{2N} (n) < 16\left( \frac{6}{\pi } \right)^{2N} \frac{{\sqrt {N + 1} }}{{(\sqrt {24n} )^{2N} }}
\]
if $n> \widehat{g}(2N)$, and
\[
 - 21\left( {\frac{6}{\pi }} \right)^{2N + 1} \frac{{\sqrt {N + 2} }}{{(\sqrt {24n} )^{2N + 1} }} < R_{2N + 1} (n) < 11\left( {\frac{6}{\pi }} \right)^{2N + 1} \frac{{\sqrt {N + 2} }}{{(\sqrt {24n} )^{2N + 1} }}
\]
if $n> \widehat{g}(2N+1)$. Here, $\widehat{g}(N)$ is a quantity comparable in size to our $\nu _N$ (see \cite[Theorem 2.1]{Banerjee2024} for details). Note that, due to the factor $(6/\pi)^N$, these upper and lower bounds considerably exceed in magnitude the last several terms of the truncated asymptotic expansion \eqref{eq4} when considering $2N$ and $2N+1$ terms, respectively. As an application of their result, Banerjee et al. showed that
\[
 - \frac{0.55}{n^2} < R_4 (n) < \frac{0.65}{n^2 }
\]
holds true for $n\ge 116$ \cite[Corollary 7.6]{Banerjee2024}. Since $\nu _4 (3.474)=116$, Theorem \ref{thm3} implies the sharper bounds:
\[
 - \frac{0.0135}{n^2} < R_4 (n) < \frac{0.017}{n^2}
\]
for $n\ge 116$. 

The subsequent sections of the paper are organised as follows. In Section \ref{section2}, we present the proofs of the main results. Section \ref{section3} offers numerical examples that demonstrate sharpness of our bounds. The paper concludes with a brief discussion in Section \ref{section4}.

\section{Proofs of the main results}\label{section2}

In this section, we demonstrate the bounds for the remainder term $R_N(n)$ as presented in Theorems \ref{thm1}--\ref{thm3}. To prove Theorem \ref{thm1}, we will state and prove two lemmas.

\begin{lemma}\label{lemma1} The sequence $|c_m|$ is strictly monotonically decreasing.
\end{lemma}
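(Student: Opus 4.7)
My plan is to prove $|c_m| > |c_{m+1}|$ by a direct term-by-term comparison of the explicit finite sums in \eqref{eq5}. Every summand in \eqref{eq5} is positive, so if I set
\[
\alpha_k^{(m)} := \binom{m+1}{k}\frac{m+1-k}{(m+1-2k)!}\left(\frac{\pi}{6}\right)^{m-2k} \ge 0,
\]
then $(4\sqrt 6)^m|c_m| = \sum_k \alpha_k^{(m)}$, and the claim becomes equivalent to $4\sqrt 6\sum_k\alpha_k^{(m)} > \sum_k\alpha_k^{(m+1)}$.

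The central computation is the ratio
\[
\frac{\alpha_k^{(m+1)}}{\alpha_k^{(m)}} = \frac{(m+2)\pi}{6(m+1-k)(m+2-2k)}.
\]
Over the valid range $0 \le k \le \lfloor (m+1)/2\rfloor$ one checks directly that $(m+1-k)(m+2-2k)\ge (m+1)/2$. Combined with the easy bound $12\sqrt 6\,(m+1) > \pi(m+2)$ (which reduces, in the worst case $m=0$, to the trivial inequality $12\sqrt 6 > 2\pi$), this shows $4\sqrt 6\,\alpha_k^{(m)} > \alpha_k^{(m+1)}$ for every $k$ common to the two sums. When $m$ is odd, the index ranges for $|c_m|$ and $|c_{m+1}|$ coincide, and summing these term-wise inequalities yields the lemma immediately.

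The only real subtlety, and the main obstacle, is the case $m=2N$ even: the sum for $|c_{m+1}|$ carries one additional summand $\alpha_{N+1}^{(2N+1)}$, namely the term arising from the factor $(\pi/6)^{-1}$ in \eqref{eq5}. My plan is to absorb this extra term into the slack left over at the neighbouring index $k=N$. A brief calculation using $\binom{2N+2}{N+1}=2\binom{2N+1}{N}$ gives
\[
\alpha_{N+1}^{(2N+1)} = \frac{12}{\pi}\,\alpha_N^{(2N)}, \qquad \alpha_N^{(2N+1)} = \frac{\pi}{6}\,\alpha_N^{(2N)},
\]
so the slack at $k=N$ equals $(4\sqrt 6 - \pi/6)\,\alpha_N^{(2N)}$. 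The proof then hinges on the single numerical inequality
\[
4\sqrt 6 \;>\; \frac{\pi}{6} + \frac{12}{\pi},
\]
i.e.\ $24\pi\sqrt 6 > \pi^2 + 72$, which is easy to verify. Since every inequality used is strict, strict monotonicity of $|c_m|$ follows.
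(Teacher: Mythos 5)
Your proof is correct and follows essentially the same route as the paper's: a direct term-by-term comparison of the finite sums in \eqref{eq5}, with the extra summand that appears when passing from an even to an odd index absorbed via the same key numerical inequality $\frac{\pi}{6}+\frac{12}{\pi}<4\sqrt{6}$. The only difference is organizational---you bound a single uniform ratio $\alpha_k^{(m+1)}/\alpha_k^{(m)}$ and absorb the extra term into the slack at $k=N$, whereas the paper runs the comparison separately for the two parities and factors out $\bigl(1+\tfrac{72}{\pi^2}\bigr)$ from the whole sum---but the substance is identical.
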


\begin{proof} We establish the lemma in two steps by showing that both $|c_{2m}|<|c_{2m-1}|$ ($m\ge 1$) and $|c_{2m+1}|<|c_{2m}|$ ($m\ge 0$) hold. Starting with $|c_{2m}|$, we have
\begin{align*}
\left| c_{2m} \right| & = \frac{1}{{(4\sqrt 6 )^{2m} }}\sum_{k = 0}^m {\binom{2m+1}{k}\frac{{2m + 1 - k}}{{(2m + 1 - 2k)!}}\left( {\frac{\pi }{6}} \right)^{2m - 2k} } 
\\ & = \frac{\pi }{6}\frac{1}{{4\sqrt 6 }}\frac{1}{{(4\sqrt 6 )^{2m - 1} }}\sum_{k = 0}^m {\frac{{2m + 1}}{{(2m - k)(2m + 1 - 2k)}}\binom{2m}{k}\frac{{2m - k}}{{(2m - 2k)!}}\left( {\frac{\pi }{6}} \right)^{2m - 1 - 2k} } 
\\ & \le \frac{\pi }{6}\frac{1}{{4\sqrt 6 }}\frac{{2m + 1}}{m}\frac{1}{{(4\sqrt 6 )^{2m - 1} }}\sum_{k = 0}^m {\binom{2m}{k}\frac{{2m - k}}{{(2m - 2k)!}}\left( {\frac{\pi }{6}} \right)^{2m - 1 - 2k} } 
\\ & = \frac{\pi }{6}\frac{1}{{4\sqrt 6 }}\frac{{2m + 1}}{m}\left| c_{2m - 1} \right| \le \frac{\pi }{2}\frac{1}{{4\sqrt 6 }}\left| c_{2m - 1} \right| < \left| c_{2m - 1} \right|
\end{align*}
for any $m\ge 1$. To establish $|c_{2m+1}|<|c_{2m}|$, note that
\[
\left[ {\binom{2m+2}{k}\frac{{2m + 2 - k}}{{(2m + 2 - 2k)!}}\left( {\frac{\pi }{6}} \right)^{2m + 1 - 2k} } \right]_{k = m + 1}  = \frac{{72}}{{\pi ^2 }}\left[ {\binom{2m+2}{k}\frac{{2m + 2 - k}}{{(2m + 2 - 2k)!}}\left( {\frac{\pi }{6}} \right)^{2m + 1 - 2k} } \right]_{k = m} .
\]
Consequently, for all $m\ge 0$,
\begin{align*}
\left| c_{2m + 1}\right| & = \frac{1}{{(4\sqrt 6 )^{2m + 1} }}\sum_{k = 0}^{m + 1} {\binom{2m+2}{k}\frac{{2m + 2 - k}}{{(2m + 2 - 2k)!}}\left( {\frac{\pi }{6}} \right)^{2m + 1 - 2k} } 
\\ &  \le \left( {1 + \frac{{72}}{{\pi ^2 }}} \right)\frac{1}{{(4\sqrt 6 )^{2m + 1} }}\sum_{k = 0}^m {\binom{2m+2}{k}\frac{{2m + 2 - k}}{{(2m + 2 - 2k)!}}\left( {\frac{\pi }{6}} \right)^{2m + 1 - 2k} } 
\\ &  = \left( {\frac{\pi }{6} + \frac{{12}}{\pi }} \right)\frac{1}{{4\sqrt 6 }}\frac{1}{{(4\sqrt 6 )^{2m} }}\sum_{k = 0}^m {\frac{{m + 1}}{{(m + 1 - k)(2m + 1 - k)}}\binom{2m+1}{k}\frac{{2m + 1 - k}}{{(2m + 1 - 2k)!}}\left( {\frac{\pi }{6}} \right)^{2m - 2k} } 
\\ &  \le \left( {\frac{\pi }{6} + \frac{{12}}{\pi }} \right)\frac{1}{{4\sqrt 6 }}\frac{1}{{(4\sqrt 6 )^{2m} }}\sum_{k = 0}^m {\binom{2m+1}{k}\frac{{2m + 1 - k}}{{(2m + 1 - 2k)!}}\left( {\frac{\pi }{6}} \right)^{2m - 2k} } 
\\ &  = \left( {\frac{\pi }{6} + \frac{{12}}{\pi }} \right)\frac{1}{{4\sqrt 6 }}\left| c_{2m} \right| < \left| c_{2m}\right|.
\end{align*}
\end{proof}

\begin{lemma}\label{lemma3}
If we define $\widehat{R}(n)$ for each positive integer $n$ by 
\begin{equation}\label{eq7}
p(n) = \frac{\exp \left( \pi \sqrt {\frac{2n}{3}} \right)}{4\sqrt 3 n}\left( \sum\limits_{m = 0}^\infty \frac{c_m }{n^{m/2} } + \widehat{R}(n) \right),
\end{equation}
then 
\[
\big| \widehat{R}(n)\big| \le \exp \bigg( -\frac{\pi}{2} \sqrt {\frac{2n}{3}} \bigg)
\]
holds for all $n\ge 1$.
\end{lemma}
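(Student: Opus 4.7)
The approach is to derive the bound from Rademacher's convergent series \eqref{eq3} for $p(n)$, isolating the contribution of its leading ($m=1$) term. Denote by $T_m(n)$ the $m$-th summand of \eqref{eq3}. Using the identity $\cosh u - (\sinh u)/u = \tfrac{1}{2}[(1-1/u)e^u + (1+1/u)e^{-u}]$, I would split $T_1(n) = L_+(n) + L_-(n)$, where
\[
L_{\pm}(n) = \frac{2\sqrt 3}{24n-1}\bigg(1\mp\frac{6}{\pi\sqrt{24n-1}}\bigg)\exp\bigg(\pm\frac{\pi\sqrt{24n-1}}{6}\bigg).
\]
The piece $L_+$ carries the dominant exponential, while $L_-$ is subdominant.

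The central identity, implicit in O'Sullivan's derivation of \eqref{eq5}, is
\[
L_+(n) = \frac{\exp(\pi\sqrt{2n/3})}{4\sqrt 3\, n}\sum_{m=0}^\infty \frac{c_m}{n^{m/2}},
\]
with the series convergent for every $n\ge 1$. I would verify this independently by writing $\pi\sqrt{24n-1}/6 = \pi\sqrt{2n/3}\,\sqrt{1-1/(24n)}$, expanding the square root via the binomial series, exponentiating, multiplying by the algebraic factor $(2\sqrt 3/(24n-1))(1-6/(\pi\sqrt{24n-1}))$, and matching the resulting coefficients of $n^{-m/2}$ against the closed form \eqref{eq5}. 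Granting this identity, the defining relation \eqref{eq7} reduces the lemma to the inequality
\[
\bigg|L_-(n) + \sum_{m=2}^\infty T_m(n)\bigg| \le \frac{\exp(\pi\sqrt{2n/3}/2)}{4\sqrt 3\, n}.
\]

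The two pieces are now bounded separately. For $L_-$: because $\sqrt{1-1/(24n)}\ge \sqrt{23/24}>1/2$ for $n\ge 1$, we have $\pi\sqrt{24n-1}/6\ge \pi\sqrt{2n/3}/2$, so $|L_-(n)|$ is at most a rational factor in $n$ times $\exp(-\pi\sqrt{2n/3}/2)$, far below the target. For the Rademacher tail, the key observation is that $u_m = \pi\sqrt{24n-1}/(6m)\le u_2\le \pi\sqrt{2n/3}/2$ for every $m\ge 2$, so $e^{u_m}\le \exp(\pi\sqrt{2n/3}/2)$ throughout. Combining this with the trivial bound $|A_m(n)|\le m$ and the elementary estimate $\cosh u - (\sinh u)/u\le \tfrac{1}{2}e^u$ produces a bound on $\sum_{m\ge 2}|T_m(n)|$ of the form (rational function of $n$)$\,\times\exp(\pi\sqrt{2n/3}/2)$. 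The main obstacle will be tightening the constants: the $m=2$ term is the numerically dominant contribution and already absorbs a sizable fraction of the target bound for large $n$, so I would sharpen the tail estimate by using $e^{u_m}\le e^{u_3} = \exp(\pi\sqrt{2n/3}/3)$ for $m\ge 3$ to extract an additional exponentially small factor from that part of the sum.
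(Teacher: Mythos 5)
Your reduction is the same one the paper uses: the identity $L_+(n)=\frac{\exp(\pi\sqrt{2n/3})}{4\sqrt 3\,n}\sum_{m\ge 0}c_m n^{-m/2}$ is exactly O'Sullivan's generating function \eqref{eq10} evaluated at $z=1/\sqrt{24n}$, and the paper likewise reduces the lemma to bounding $p(n)-L_+(n)$ (it imports that bound from Lemma 3.1 of \cite{Banerjee2023} rather than re-deriving it from Rademacher's series, but the content is the same). The skeleton of your argument is therefore sound. The gaps are all in the estimation phase, and they are genuine. First, the tail bound as you describe it does not converge: combining $|A_m(n)|\le m$ with $\cosh u-(\sinh u)/u\le\tfrac12 e^{u}\le\tfrac12 e^{u_2}$ majorises $\sum_{m\ge 2}|T_m(n)|$ by a constant times $\sum_{m\ge 2}\sqrt m$, which diverges. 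You must exploit that $\cosh u-(\sinh u)/u=O(u^2)$ as $u\to 0$, i.e.\ $O(\mu^2(n)/m^2)$, to obtain a summable majorant $\sum_m m^{-3/2}$; replacing $e^{u_m}$ by $e^{u_3}$ for $m\ge 3$, as you suggest, does not address this. Second, the constant for $m=2$ is already fatal with your stated bound: $|A_2(n)|\le 2$ together with $\tfrac12 e^{u_2}$ gives $|T_2(n)|\le\frac{4\sqrt 3}{24n-1}\sqrt 2\,e^{u_2}$, which is asymptotically $\sqrt 2$ times the target $\frac{1}{4\sqrt 3\,n}e^{\pi\sqrt{2n/3}/2}$; you need the sharper $|A_2(n)|\le\phi(2)=1$ (giving roughly $0.71$ of the target) for the budget to have any chance of closing.

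Third, and most importantly, even after these repairs the analytic estimates do not close for small $n$. The corrected tail majorant contributes on the order of $\mu^2(n)\,e^{-\mu(n)/6}$ relative to the target, and this exceeds $1$ until $n$ is in the hundreds; at $n=1$ the crude bound on $\sum_{m\ge 4}|T_m(n)|$ alone already exceeds the allowed total. This is not an artifact of sloppiness on your part: the paper faces the same problem and resolves it by a monotonicity argument valid only for $n\ge 432$ together with \emph{direct numerical verification} of the inequality for $1\le n\le 431$. Your proposal contains no provision for small $n$, and without one (or without substantially sharper pointwise estimates on the Rademacher terms) the proof cannot be completed as outlined.
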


\begin{proof} For brevity, let us introduce the notation 
\[
\mu (n) = \frac{\pi }{6}\sqrt {24n - 1} .
\]
From the proof of Lemma 3.1 in \cite{Banerjee2023}, it follows that
\begin{align*}
p(n)=\frac{{\exp \left( {\pi \sqrt {\frac{2n}{3}} } \right)}}{{4\sqrt 3 n}}&\bigg( \frac{{24n}}{{24n - 1}}\exp \bigg( {\mu (n) - \pi \sqrt {\frac{2n}{3}} } \bigg)\left( {1 - \frac{1}{{\mu (n)}}} \right) \bigg. \\ &\bigg. +\, T(n)\frac{{24n}}{{24n - 1}}\exp \bigg( {\mu (n) - \pi \sqrt {\frac{2n}{3}} } \bigg) \bigg),
\end{align*}
where $T(n)$ satisfies the inequality
\begin{gather}\label{bound}
\begin{split}
\left| T(n) \right| \le &\bigg[ \frac{1}{\sqrt 2 } + \frac{12 \cdot 2^{1/3}  - \sqrt 2 }{\mu (n)} + \left( \frac{\mu ^2 (n)}{2^{2/3} }- 12 \cdot 2^{1/3}  \right)\e^{ - \mu (n)/2} \bigg. \\ &\bigg. + \left( \frac{1}{\sqrt 2 } + \frac{2 - 12 \cdot 2^{1/3} }{\mu (n)}\right)\e^{ - \mu (n)}  + \left( 1 + \frac{1}{\mu (n)} \right)\e^{ - 3\mu (n)/2}  \bigg]\e^{ - \mu (n)/2}
\end{split}
\end{gather}
for any $n\ge 1$. We can infer from \eqref{eq10} that
\[
\frac{{24n}}{{24n - 1}}\exp \bigg( \mu (n) - \pi \sqrt {\frac{2n}{3}}  \bigg)\left( {1 - \frac{1}{{\mu (n)}}} \right)=\sum\limits_{m = 0}^\infty  {\frac{{c_m }}{{n^{m/2} }}} 
\]
for all $n\geq 1$, hence
\[
\widehat{R}(n)=T(n)\frac{24n}{24n - 1}\exp \bigg( \mu (n) - \pi \sqrt {\frac{2n}{3}} \bigg).
\]
We observe that
\[
\frac{{24n}}{{24n - 1}}\exp \bigg(\mu (n) - \pi \sqrt {\frac{2n}{3}} \bigg) = \frac{{24n}}{{24n - 1}}\exp \left( { - \frac{\pi }{6}\frac{1}{{\sqrt {24n - 1}  + \sqrt {24n} }}} \right) \le \frac{{24n}}{{24n - 1}}\exp \left( { - \frac{\pi }{{12}}\frac{1}{{\sqrt {24n} }}} \right).
\]
It is easily verified that
\[
\frac{1}{{1 - x^2 }}\exp \left( { - \frac{\pi }{{12}}x} \right) \le 1
\]
for $0 \le x \le \frac{1}{4}$. Consequently,
\[
\frac{{24n}}{{24n - 1}}\exp \bigg( \mu (n) - \pi \sqrt {\frac{2n}{3}} \bigg) \le 1
\]
for $n\ge 1$. Thus, it suffices to prove that $T(n) \le \exp\big( - \frac{\pi}{2}\sqrt{\frac{2n}{3}}\big)$ for all positive integers $n$. The following simpler bound follows directly from \eqref{bound}:
\[
T(n) \le \left[ \frac{1}{\sqrt 2 }+ \frac{14}{\mu (n)} + \left( \frac{2}{3}\mu ^2 (n) - 13 \right)\e^{ - \mu (n)/2} \right]\e^{ - \mu (n)/2} .
\]
It can be readily shown that the quantity in the square brackets decreases for $n\ge 8$. Particularly, it is less than $0.97$ for $n \ge 432$. Thus,
\[
T(n) \le 0.97\cdot \e^{ - \mu (n)/2} 
\]
for $n \ge 432$. Direct numerical evaluation confirms this inequality for $1\le n\le 431$ as well. Lastly,
\[
0.97\exp \bigg( \frac{\pi}{2} \sqrt {\frac{2n}{3}}  - \frac{\mu (n)}{2}\bigg) = 0.97\exp \left( {\frac{\pi }{12}\frac{1}{{\sqrt {24n - 1}  + \sqrt {24n} }}} \right) \le 0.97\exp \left( {\frac{\pi }{12}\frac{1}{{\sqrt {23}  + \sqrt {24} }}} \right) < 1
\]
holds for any positive integer $n$, thus concluding the proof.
\end{proof}

\begin{proof}[Proof of Theorem \ref{thm1}] From the definition \eqref{eq5}, it is clear that $(-1)^m c_m > 0$ for every non-negative integer $m$. Consequently, Lemma \ref{lemma1} and the alternating series test lead to the conclusion that
\begin{equation}\label{eq9}
\sum_{m = 0}^\infty \frac{c_m }{n^{m/2} } = \sum_{m = 0}^{N - 1} \frac{c_m }{n^{m/2} }  + \theta _N (n) \cdot \frac{c_N }{n^{N/2} },
\end{equation}
for any $n \geq 1$, where $0 < \theta_N(n) < 1$ is a suitable number. From \eqref{eq8}, \eqref{eq7}, and \eqref{eq9}, we can infer that
\[
R_N (n) = \theta _N (n) \cdot \frac{c_N}{n^{N/2}} + \widehat{R}(n),
\]
which, given $(-1)^N c_N > 0$ and Lemma \ref{lemma3}, implies the assertion of Theorem \ref{thm1}.
\end{proof}

The proof of Theorem \ref{thm2} follows directly from Theorem \ref{thm1} and the following lemma.

\begin{lemma}\label{lemma2} For every non-negative integer $m$, the following inequalities hold:
\begin{align*}
& \left| {c_{2m} } \right| \le \frac{{6\sqrt 2 }}{{\pi ^{3/2} }}\sinh \left( {\frac{\pi }{6}} \right)\frac{{\sqrt {2m + 1} }}{{(\sqrt {24} )^{2m} }}\sqrt {1 + \frac{1}{{4m + 1}}} ,\\ &
\left| {c_{2m + 1} } \right| \le \frac{{6\sqrt 2 }}{{\pi ^{3/2} }}\cosh \left( {\frac{\pi }{6}} \right)\frac{{\sqrt {2m + 2} }}{{(\sqrt {24} )^{2m + 1} }}\sqrt {1 - \frac{1}{{4m + 5}}} .
\end{align*}
\end{lemma}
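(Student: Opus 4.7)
The plan is to bound $|c_m|$ by first reindexing the sum in \eqref{eq5} so that its structure mirrors a truncated Taylor series of $\sinh(\pi/6)$ or $\cosh(\pi/6)$, then replacing every inner binomial factor by its central (and largest) value, and finally invoking a sharp Wallis-type estimate for that central binomial coefficient.

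For the even case, the first step is to apply the identity $\binom{n}{k}\,k = n\binom{n-1}{k-1}$ together with the symmetry $\binom{2m+1}{m-\ell} = \binom{2m+1}{m+1+\ell}$, under the substitution $\ell = m - k$, to obtain
\[
|c_{2m}| = \frac{2m+1}{(4\sqrt{6})^{2m}}\sum_{\ell=0}^{m}\binom{2m}{m+\ell}\frac{(\pi/6)^{2\ell}}{(2\ell+1)!}.
\]
Because $\binom{2m}{m+\ell} \leq \binom{2m}{m}$ for $0 \leq \ell \leq m$ and $\sum_{\ell=0}^{\infty}(\pi/6)^{2\ell}/(2\ell+1)! = (6/\pi)\sinh(\pi/6)$, extending the sum to infinity yields
\[
|c_{2m}| \leq \frac{6(2m+1)\binom{2m}{m}\sinh(\pi/6)}{\pi\cdot 96^m}.
\]
Substituting the sharp Wallis-type estimate $\binom{2m}{m} \leq 2\cdot 4^m/\sqrt{\pi(4m+1)}$ and simplifying (using $96^m = 4^m \cdot 24^m$ and $(2m+1)(4m+2) = 2(2m+1)^2$) produces exactly the first inequality of the lemma; the factor $\sqrt{1+1/(4m+1)}$ arises directly from the Wallis denominator, so no slack is introduced in the dominant asymptotics.

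The odd case proceeds along identical lines. Using $\binom{2m+2}{k}(2m+2-k) = (2m+2)\binom{2m+1}{k}$ and the substitution $j = m+1-k$, together with $\binom{2m+1}{m+1-j} = \binom{2m+1}{m+j}$, I would obtain
\[
|c_{2m+1}| = \frac{2m+2}{(4\sqrt{6})^{2m+1}}\sum_{j=0}^{m+1}\binom{2m+1}{m+j}\frac{(\pi/6)^{2j-1}}{(2j)!}.
\]
Bounding $\binom{2m+1}{m+j} \leq \binom{2m+1}{m}$ and comparing term-by-term with $\cosh(\pi/6) = \sum_{j\ge 0}(\pi/6)^{2j}/(2j)!$ yields the analogous intermediate estimate. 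Combining $\binom{2m+1}{m} = \binom{2m+2}{m+1}/2$ with the Wallis inequality applied at index $m+1$ gives $\binom{2m+1}{m} \leq 4^{m+1}/\sqrt{\pi(4m+5)}$, and routine simplification then produces the second inequality.

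The Wallis-type bound $\binom{2m}{m}/4^m \leq 2/\sqrt{\pi(4m+1)}$ itself is classical and can be proved in a few lines by checking that $a_m := \bigl(\binom{2m}{m}/4^m\bigr)^2(4m+1)$ is strictly monotonically increasing (a one-line ratio calculation) with limit $4/\pi$ by Stirling. The principal conceptual step, and the one I expect to be the main obstacle, is spotting the correct reindexing: once the defining sum in \eqref{eq5} is recast as a truncation of $\sinh(\pi/6)$ or $\cosh(\pi/6)$ in which the central binomial coefficient appears as the dominant factor, every subsequent inequality is tight in the leading term, and the correction factors $\sqrt{1 \pm 1/(4m+1)}$ and $\sqrt{1 \pm 1/(4m+5)}$ emerge naturally without any wasted slack.
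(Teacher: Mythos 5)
Your proof is correct and follows essentially the same route as the paper's: reindex the defining sum so it becomes a truncation of the $\sinh(\pi/6)$ (resp.\ $\cosh(\pi/6)$) series weighted by binomial coefficients, bound those by the central one, and finish with the Kazarinoff--Wallis estimate $\binom{2m}{m}\le 2^{2m}/\sqrt{\pi(m+\tfrac14)}$. The only cosmetic difference is that you absorb the factor $m+1+k$ into the binomial coefficient via $\binom{n}{k}k=n\binom{n-1}{k-1}$ before bounding, whereas the paper bounds the product $\binom{2m+1}{m-k}(m+1+k)$ directly; the resulting estimates are identical.
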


\begin{proof} By defining $\widetilde{c}_m = (4\sqrt 6 )^m c_m$, it suffices to prove that
\begin{align*}
& \left| {\widetilde{c}_{2m} } \right| \le \frac{{6\sqrt 2 }}{{\pi ^{3/2} }}\sinh \left( {\frac{\pi }{6}} \right)\sqrt {2m + 1} 2^{2m} \sqrt {1 + \frac{1}{{4m + 1}}}  ,\\ & \left| {\widetilde{c}_{2m + 1} } \right| \le \frac{{6\sqrt 2 }}{{\pi ^{3/2} }}\cosh \left( {\frac{\pi }{6}} \right)\sqrt {2m + 2} 2^{2m + 1} \sqrt {1 - \frac{1}{{4m + 5}}} 
\end{align*}
for any non-negative integer $m$. We shall use that fact that
\[
\binom{2m}{m} = \frac{2^{2m}}{\sqrt \pi}\frac{\Gamma \left( {m + \frac{1}{2}} \right)}{\Gamma (m + 1)} \le \frac{2^{2m} }{\sqrt{\pi \left( m + \frac{1}{4} \right)} }
\]
for all $m\ge 0$ (see, e.g., \cite{Kazarinoff1956}). Straightforward estimation and this inequality show that
\begin{align*}
\left|  \widetilde{c}_{2m} \right| & = \sum\limits_{k = 0}^m {\binom{2m+1}{k}\frac{{2m + 1 - k}}{{(2m + 1 - 2k)!}}\left( {\frac{\pi }{6}} \right)^{2m - 2k} } 
\\ &  = \frac{6}{\pi }\sum\limits_{k = 0}^m {\binom{2m+1}{m-k}\frac{{m + 1 + k}}{{(2k + 1)!}}\left( {\frac{\pi }{6}} \right)^{2k + 1} } 
\\ &  \le \frac{6}{\pi }\binom{2m+1}{m}(m + 1)\sum\limits_{k = 0}^m {\frac{1}{{(2k + 1)!}}\left( {\frac{\pi }{6}} \right)^{2k + 1} } 
\\ &  \le \frac{6}{\pi }\sinh \left( {\frac{\pi }{6}} \right)\binom{2m+1}{m}(m + 1)
\\ &  = \frac{6}{\pi }\sinh \left( {\frac{\pi }{6}} \right)\binom{2m}{m}(2m + 1) \le \frac{{6\sqrt 2 }}{{\pi ^{3/2} }}\sinh \left( {\frac{\pi }{6}} \right)\sqrt {2m + 1} 2^{2m} \sqrt {1 + \frac{1}{{4m + 1}}} ,
\end{align*}
and
\begin{align*}
\left|  \widetilde{c}_{2m + 1}   \right| & = \sum\limits_{k = 0}^{m + 1} { \binom{2m+2}{k}\frac{{2m + 2 - k}}{{(2m + 2 - 2k)!}}\left( {\frac{\pi }{6}} \right)^{2m + 1 - 2k} } 
\\ & = \frac{6}{\pi }\sum\limits_{k = 0}^{m + 1} {\binom{2m+2}{m+1-k}\frac{{m + 1 + k}}{{(2k)!}}\left( {\frac{\pi }{6}} \right)^{2k} } 
\\ & \le \frac{6}{\pi }\binom{2m+2}{m+1}(m + 1)\sum\limits_{k = 0}^{m + 1} {\frac{1}{{(2k)!}}\left( {\frac{\pi }{6}} \right)^{2k} } 
\\ & \le \frac{6}{\pi }\cosh \left( {\frac{\pi }{6}} \right)\binom{2m+2}{m+1}(m + 1) \le \frac{{6\sqrt 2 }}{{\pi ^{3/2} }}\cosh \left( {\frac{\pi }{6}} \right)\sqrt {2m + 2} 2^{2m + 1} \sqrt {1 - \frac{1}{{4m + 5}}} ,
\end{align*}
respectively.
\end{proof}

\begin{proof}[Proof of Theorem \ref{thm3}] Given Theorem \ref{thm2}, it suffices to establish the validity of the following inequality for $C > 0$:
\[
\exp \bigg( { - \frac{\pi }{2}\sqrt {\frac{{2n}}{3}} } \bigg) \le C\frac{{\sqrt {N + 1} }}{{(\sqrt {24n} )^N }}
\]
when $N \ge 0$ and $n \ge \nu_N(C)$. This inequality can be re-written as
\[
 - \frac{\pi}{2N}\sqrt {\frac{2n}{3}} \exp \bigg(- \frac{\pi}{2N}\sqrt {\frac{2n}{3}} \bigg) \ge  - \frac{\pi }{12N}(C\sqrt{N + 1})^{1/N} .
\]
From the definition and monotonicity properties of the function $W_{-1}$, we find that this inequality holds true if
\[
n\ge \frac{3}{2}\left( \frac{2N}{\pi}W_{-1}\left(-\frac{\pi}{12N} (C\sqrt{N + 1})^{1/N}\right) \right)^2,
\]
thus confirming its validity when $n \ge \nu_N(C)$.
\end{proof}

\section{Numerical examples}\label{section3}

In this section, we present numerical results that confirm the efficacy of the error bounds given in Theorem \ref{thm1} (see Table \ref{table1}) and Theorem \ref{thm3} (see Table \ref{table2}). In each instance, the value of $C$ in Table \ref{table2} was selected to ensure that the inequality $n \ge \nu_N(C)$ holds true.

\begin{table*}[!ht]
\begin{center}
\begin{tabular}
[c]{ l r @{\,}c@{\,} l}\hline
 & \\ [-1.5ex]
 values of $n$ and $N$ & $n=200$, $N=4$ & &  \\ [1ex]
 exact numerical value of $R_N(n)\qquad$ & $0.9016237417$ & $\times$ & $10^{-7}$ \\ [0.5ex]
 lower bound for $R_N(n)$ & $-0.1326689978$ & $\times$ & $10^{-7}$ \\ [0.5ex]
 upper bound for $R_N(n)$ & $0.9713458636$ & $\times$ & $10^{-7}$\\ [-1.5ex]
 & \\\hline
& \\ [-1.5ex]
values of $n$ and $N$ & $n=500$, $N=6$ & &  \\ [1ex]
 exact numerical value of $R_N(n)\qquad$ & $0.1523607771$ & $\times$ & $10^{-11}$ \\ [0.5ex]
 lower bound for $R_N(n)$ & $-0.0350755832$ & $\times$ & $10^{-11}$ \\ [0.5ex]
 upper bound for $R_N(n)$ & $0.1660290513$ & $\times$ & $10^{-11}$\\ [-1.5ex]
 & \\\hline
& \\ [-1.5ex]
values of $n$ and $N$ & $n=200$, $N=5$ & &  \\ [1ex]
 exact numerical value of $R_N(n)\qquad$ & $0.0629468759$ & $\times$ & $10^{-7}$ \\ [0.5ex]
 lower bound for $R_N(n)$ & $-0.1582129737$ & $\times$ & $10^{-7}$ \\ [0.5ex]
 upper bound for $R_N(n)$ & $0.1326689978$ & $\times$ & $10^{-7}$\\ [-1.5ex]
 & \\\hline
& \\ [-1.5ex]
 values of $n$ and $N$ & $n=500$, $N=7$ & &  \\ [1ex]
 exact numerical value of $R_N(n)\qquad$ & $0.2140730897$ & $\times$ & $10^{-12}$ \\ [0.5ex]
 lower bound for $R_N(n)$ & $-0.3758934747$ & $\times$ & $10^{-12}$ \\ [0.5ex]
 upper bound for $R_N(n)$ & $0.3507558324$ & $\times$ & $10^{-12}$\\ [-1.5ex]
 & \\\hline
\end{tabular}
\end{center}
\caption{Bounds for $R_N(n)$ with various $n$ and $N$, using Theorem \ref{thm1}.}
\label{table1}
\end{table*}

\begin{table*}[!ht]
\begin{center}
\begin{tabular}
[c]{ l r @{\,}c@{\,} l}\hline
 & \\ [-1.5ex]
 values of $n$, $N$ and $C$ & $n=500$, $N=6$, $C=\frac{1}{4}$ & &  \\ [1ex]
 exact numerical value of $R_N(n)\qquad$ & $0.1523607771$ & $\times$ & $10^{-11}$ \\ [0.5ex]
 lower bound for $R_N(n)$ & $-0.0382776520$ & $\times$ & $10^{-11}$ \\ [0.5ex]
 upper bound for $R_N(n)$ & $0.1709265000$ & $\times$ & $10^{-11}$\\ [-1.5ex]
 & \\\hline
& \\ [-1.5ex]
values of $n$, $N$ and $C$ & $n=1000$, $N=10$, $C=5839$ & &  \\ [1ex]
 exact numerical value of $R_N(n)\qquad$ & $0.1676334056$ & $\times$ & $10^{-17}$ \\ [0.5ex]
 lower bound for $R_N(n)$ & $-0.2432084216$ & $\times$ & $10^{-17}$ \\ [0.5ex]
 upper bound for $R_N(n)$ & $0.2432440132$ & $\times$ & $10^{-17}$\\ [-1.5ex]
 & \\\hline
& \\ [-1.5ex]
values of $n$, $N$ and $C$ & $n=500$, $N=7$, $C=24$ & &  \\ [1ex]
 exact numerical value of $R_N(n)\qquad$ & $0.2140730897$ & $\times$ & $10^{-12}$ \\ [0.5ex]
 lower bound for $R_N(n)$ & $-0.3837969630$ & $\times$ & $10^{-12}$ \\ [0.5ex]
 upper bound for $R_N(n)$ & $0.3586095691$ & $\times$ & $10^{-12}$\\ [-1.5ex]
 & \\\hline
& \\ [-1.5ex]
 values of $n$, $N$ and $C$ & $n=1000$, $N=11$, $C=866061$ & &  \\ [1ex]
 exact numerical value of $R_N(n)\qquad$ & $0.1675981042$ & $\times$ & $10^{-17}$ \\ [0.5ex]
 lower bound for $R_N(n)$ & $-0.2432081529$ & $\times$ & $10^{-17}$ \\ [0.5ex]
 upper bound for $R_N(n)$ & $0.2432076748$ & $\times$ & $10^{-17}$\\ [-1.5ex]
 & \\\hline
\end{tabular}
\end{center}
\caption{Bounds for $R_N(n)$ with various $n$, $N$ and $C$, using Theorem \ref{thm3}.}
\label{table2}
\end{table*}

\section{Concluding remarks}\label{section4}

In this paper, we studied a Poincar\'e-type asymptotic expansion for the integer partition function, $p(n)$, recently derived by O'Sullivan. We established computable error bounds for this expansion, which are both sharper and derived more simply than those found in recent literature.

Obtaining analogous asymptotic expansions with computable error bounds for related functions, such as partitions into distinct parts ($q(n)$), partitions into perfect $k^{\text{th}}$ powers ($p^k(n)$), Andrews' $\operatorname{spt}$-function \cite[Theorem 9.9]{Banerjee2024b}, or the coefficients of Klein's $j$-invariant, may prove to be of interest. An application could involve refining the conditions underlying recent results \cite{OSullivan2023} on convexity and log-concavity for $p^k(n)$, making them more explicit.

\section{Declarations}

\subsection*{Ethics approval and consent to participate} Not applicable.

\subsection*{Consent for publication} Not applicable.

\subsection*{Availability of data and materials} We do not analyse or generate any datasets as our work operates within a theoretical framework.

\subsection*{Competing interests} The author declares no competing interests.

\subsection*{Funding} The author's research was supported by the JSPS KAKENHI Grants No. JP21F21020 and No. 22H01146.

\subsection*{Authors' contributions} All contributions to this manuscript were made by the sole author.

\subsection*{Acknowledgements} The author would like to thank the referees for their valuable comments and suggestions, which improved the presentation of this paper.

\appendix

\section{Asymptotics for the coefficients}\label{appendix}

In this appendix, we provide two different proofs for the asymptotic approximations \eqref{eq11} and \eqref{eq12}.

\begin{proof}[First proof] O'Sullivan \cite{OSullivan2023} showed that the coefficients $c_m$ possess the generating function
\begin{equation}\label{eq10}
\exp \left( { - \frac{\pi }{6}\frac{z}{{\sqrt {1 - z^2 }  + 1}}} \right)\left( {\frac{1}{{1 - z^2 }} - \frac{6}{\pi }\frac{z}{{(1 - z^2 )^{3/2} }}} \right) = \sum\limits_{m = 0}^\infty  {(\sqrt {24} )^m c_m z^m } .
\end{equation}
The singularities of this generating function are located at $z=\pm 1$. By analysing the local behaviour at these points and employing Darboux's method \cite[pp. 309--315]{Olver1997}, we determine that the coefficient $c_m$ is asymptotically equivalent to the $m^{\text{th}}$ coefficient in the Maclaurin series of
\[
\frac{3}{{\sqrt 2 \pi }}\left( {\frac{{\e^{\pi /6} }}{{(1 + z)^{3/2} }} - \frac{{\e^{ - \pi /6} }}{{(1 - z)^{3/2} }}} \right).
\]
Consequently, as $m\to+\infty$, we have
\[
(\sqrt {24} )^m c_m  \sim \frac{3}{{\sqrt 2 \pi }}(( - 1)^m \e^{\pi /6}  - \e^{ - \pi /6} )\binom{m+1/2}{m} \sim \frac{{3\sqrt 2 }}{{\pi ^{3/2} }}(( - 1)^m \e^{\pi /6}  - \e^{ - \pi /6} )\sqrt {m + 1} .
\]
Separating the even and odd cases, we obtain
\[
c_{2m}  \sim \frac{{6\sqrt 2 }}{{\pi ^{3/2} }}\sinh \left( {\frac{\pi }{6}} \right)\frac{{\sqrt {2m + 1} }}{{(\sqrt {24} )^{2m} }}
\]
and
\[
c_{2m + 1} \sim - \frac{{6\sqrt 2 }}{{\pi ^{3/2} }}\cosh \left( {\frac{\pi }{6}} \right)\frac{{\sqrt {2m + 2} }}{{(\sqrt {24} )^{2m + 1} }},
\]
as $m\to+\infty$.
\end{proof}

\begin{proof}[Second proof] The following more elementary proof is motivated by remarks made by one of the referees. We shall prove the asymptotic formula \eqref{eq11}; the proof of \eqref{eq12} is similar. Define $\widetilde{c}_{2m} = (4\sqrt 6 )^{2m} c_{2m}$. Then we have
\begin{align*}
\widetilde{c}_{2m} & = \sum_{k = 0}^m {\binom{2m+1}{k}\frac{{2m + 1 - k}}{{(2m + 1 - 2k)!}}\left( {\frac{\pi }{6}} \right)^{2m - 2k} } 
\\ &  = \frac{6}{\pi }\sum_{k = 0}^m {\binom{2m+1}{m-k}\frac{{m + 1 + k}}{{(2k + 1)!}}\left( {\frac{\pi }{6}} \right)^{2k + 1} }
\\ &  = \frac{6}{\pi }(2m + 1)\binom{2m}{m}\sum_{k = 0}^m  \prod_{j = 1}^k \left( 1 - \frac{k}{m + j} \right)\frac{1}{(2k + 1)!}\left( \frac{\pi }{6} \right)^{2k + 1} .
\end{align*}
Using the Weierstrass product inequality \cite{Klamkin1970}, we have
\[
\prod\limits_{j = 1}^k {\left( {1 - \frac{k}{{m + j}}} \right)}  \ge 1 - \sum_{j = 1}^k \frac{k}{m + j}  \ge 1 - \frac{k^2}{m + 1} \ge 1 - \frac{1}{2}\frac{2k(2k + 1)}{2m + 2}.
\]
Therefore,
\begin{align*}
\widetilde{c}_{2m} & \ge \frac{6}{\pi }(2m + 1)\binom{2m}{m}\sum_{k = 0}^m {\frac{1}{{(2k + 1)!}}\left( {\frac{\pi }{6}} \right)^{2k + 1} }  - \frac{\pi }{{12}}\frac{{2m + 1}}{{2m + 2}}\binom{2m}{m}\sum_{k = 1}^m {\frac{1}{{(2k - 1)!}}\left( {\frac{\pi }{6}} \right)^{2k - 1} } 
\\ & \ge \frac{6}{\pi}(2m + 1)\binom{2m}{m}\sum_{k = 0}^m \frac{1}{(2k + 1)!}\left( \frac{\pi}{6} \right)^{2k + 1} \left( 1 - \frac{\pi ^2}{72}\frac{1}{2m + 2} \right).
\end{align*}
Combining this with the standard asymptotic formula
\[
\binom{2m}{m} \sim \frac{2^{2m}}{\sqrt{\pi m}} \;\text{ as }\; m\to+\infty
\]
(see, e.g., \cite[\href{https://dlmf.nist.gov/26.3.E12}{Eq. (26.3.12)}]{DLMF}), we deduce
\[
\liminf_{m \to  + \infty } \frac{\widetilde{c}_{2m} }{\displaystyle\frac{6\sqrt 2 }{\pi ^{3/2} }\sinh \left( \frac{\pi}{6} \right)\sqrt {2m + 1} 2^{2m} } \ge 1.
\]
The fact that the corresponding $\limsup$ is at most $1$ follows from Lemma \ref{lemma2}.
\end{proof}


\begin{thebibliography}{10}

\bibitem{Banerjee2023}
K.~Banerjee, P.~Paule, C-S.~Radu, WH.~Zeng, New inequalities for $p(n)$ and $\log p(n)$, \emph{Ramanujan J.} \textbf{61} (2023), no. 4, pp. 1295--1338. 

\bibitem{Banerjee2024}
K.~Banerjee, P.~Paule, C-S.~Radu, C.~Schneider, Error bounds for the asymptotic expansion of the partition function, \emph{Rocky Mt. J. Math.}, accepted manuscript

\bibitem{Banerjee2024b}
K. Banerjee, Invariants of the quartic binary form and proofs of Chen's conjectures on inequalities for the partition function and the Andrews' spt function, available online at: \url{https://www3.risc.jku.at/publications/download/risc_6750/ProofofChenconjectures.pdf}

\bibitem{Brassesco2020}
S.~Brassesco, A.~Meyroneinc, An expansion for the number of partitions of an integer, \emph{Ramanujan J.} \textbf{51} (2020), no. 3, pp. 563--592.

\bibitem{Chen2016}
W.~Y.~C.~Chen, L.~Wang, G.~Xie, Finite differences of the logarithm of the partition function, \emph{Math. Comput.} \textbf{85} (2016), no. 298, pp. 825--847.

\bibitem{Chen2019}
W.~Y.~C.~Chen, D.~Jia, L.~Wang, Higher order Tur\'an inequalities for the partition function, \emph{Trans. Amer. Math. Soc.} \textbf{372} (2019), no. 3, pp. 2143--2165.

\bibitem{Hardy1918}
G.~H.~Hardy, S.~Ramanujan, Asymptotic formul{\ae} in combinatory analysis, \emph{Proc. Lond. Math. Soc.} \textbf{s2-17} (1918), no. 1, pp. 75--115.

\bibitem{Kazarinoff1956}
D.~K.~Kazarinoff, On Wallis' formula, \emph{Edinb. Math. Notes.} \textbf{40} (1956), pp. 19--21.

\bibitem{Klamkin1970}
M.~S.~Klamkin, D.~J.~Newman, Extensions of the Weierstrass product inequalities, \emph{Math. Mag.} \textbf{43} (1970), no. 3, pp. 137--141.

\bibitem{Lehmer1938}
D.~H.~Lehmer, On the series for the partition function, \emph{Trans. Amer. Math. Soc.} \textbf{43} (1938), no. 2, pp. 271--295.

\bibitem{Lehmer1939}
D.~H.~Lehmer, On the remainders and convergence of the series for the partition function, \emph{Trans. Amer. Math. Soc.} \textbf{46} (1939), no. 3, pp. 362--373.

\bibitem{DLMF}
\emph{NIST Digital Library of Mathematical Functions}. \url{https://dlmf.nist.gov/}, Release 1.2.1 of 2024-06-15. F.~W.~J.~Olver, A.~B.~Olde Daalhuis, D.~W.~Lozier, B.~I.~Schneider, R.~F.~Boisvert, C.~W.~Clark, B.~R.~Miller, B.~V.~Saunders, H.~S.~Cohl, and M.~A.~McClain, eds.

\bibitem{Olver1997}
F.~W.~J.~Olver, \emph{Asymptotics and Special Functions}, AKP Classics, A K Peters Ltd., Wellesley, MA, 1997. Reprint of the 1974 original, published by Academic Press, New York.

\bibitem{OSullivan2023}
C.~O'Sullivan, Detailed asymptotic expansions for partitions into powers, \emph{Int. J. Number Theory} \textbf{19} (2023), no. 9, pp. 2163--2196.

\bibitem{Rademacher1937}
H.~Rademacher, A convergent series for the partition function $p(n)$, \emph{Proc. Natl. Acad. Sci. U.S.A.} \textbf{23} (1937), no. 2, pp. 78--84.

\bibitem{Wright1934}
E.~M.~Wright, Asymptotic partition formulae. III. Partitions into $k$th powers, \emph{Acta Math.} \textbf{63} (1934), no. 1, pp. 143--191.

\end{thebibliography}
\end{document}